\newtheorem{corollary}{Corollary} 
\newtheorem{theorem}{Theorem} 
\newtheorem{definition}{Definition}  
\newtheorem{lemma}{Lemma}  
\lstdefinelanguage{julia}
{
  keywordsprefix=\@, 
  morekeywords={
    exit,whos,edit,load,is,isa,isequal,typeof,tuple,ntuple,uid,hash,finalizer,convert,promote,
    subtype,typemin,typemax,realmin,realmax,sizeof,eps,promote_type,method_exists,applicable,
    invoke,dlopen,dlsym,system,error,throw,assert,new,Inf,Nan,pi,im,begin,while,for,in,return,
    break,continue,macro,quote,let,if,elseif,else,try,catch,end,bitstype,ccall,do,using,module,
    import,export,importall,baremodule,immutable,local,global,const,Bool,Int,Int8,Int16,Int32,
    Int64,Uint,Uint8,Uint16,Uint32,Uint64,Float32,Float64,Complex64,Complex128,Any,Nothing,None,
    function,type,typealias,abstract
  },
  otherkeywords = {julia>},
  morekeywords=[2]{julia>},
  sensitive=true,
  morecomment=[l]{\#},
  morestring=[b]', 
  morestring=[b]" 
}
\newcolumntype{d}[1]{D{.}{.}{#1}}
\def\equationautorefname~#1\null{%
  (#1)\null
}
\newlength{\figwidth}
\newcommand{\email}[1]{{\tt #1}}
\newcommand{\eg}{{\em e.g., }} 
\newcommand{\ie}{{\em i.e., }}
\newcommand{\D}{\mathbb D}
\newcommand{\Z}{\mathbb Z}
\newcommand{\Ss}{\mathbb S}
\renewcommand{\phi}{\emptyset}
\newcommand{\defequals}{\; \stackrel{\text{def}}{=} \;}
\begin{document} 

\pagestyle{plain}

\title{Surreal Birthdays and Their Arithmetic}

\renewcommand{\thefootnote}{\fnsymbol{footnote}}
\author{
  Matthew Roughan\footnotemark[2]
 \\ \email{\footnotesize matthew.roughan@adelaide.edu.au}
}

\thispagestyle{plain}

\renewenvironment{itemize}{%
  \begin{list}{$\bullet$}{%
    \setlength\labelwidth{1.5em}% 
    \setlength{\leftmargin}{1.5em}%
    \setlength{\topsep}{1pt}%
    \setlength{\itemsep}{-0.0mm}%
  }%
  }{\end{list}}

\newenvironment{ssitemize}{%
  \begin{list}{$-$}{% 
    \setlength\labelwidth{2em}%
    \setlength{\leftmargin}{2em}%
    \setlength{\topsep}{-1mm}%
    \setlength{\itemsep}{-1mm}%
  }%
  }{\end{list}} 

\newenvironment{sbitemize}{%
  \begin{list}{$\bullet$}{%
    \setlength\labelwidth{2.5em}%
    \setlength{\leftmargin}{1.1em}%
    \setlength{\topsep}{-1mm}%
    \setlength{\itemsep}{-0.5mm}%
  }%
  }{\end{list}}

\renewenvironment{enumerate}{%
   \begin{list}{\arabic{enumi}.}{%
    \setlength\labelwidth{1.5em}%
    \setlength\leftmargin{1.5em}%
    \setlength{\topsep}{4pt plus 2pt minus 2pt}%
    \setlength\itemsep{-0.0mm}%
    \usecounter{enumi}}%
  }{\end{list}}

\maketitle

\renewcommand{\thefootnote}{\fnsymbol{footnote}}
% \footnotetext[1]{APNIC, South 
%          Brisbane, 4101, QLD, Australia.}        
\footnotetext[2]{ARC Centre of Excellence for
          Math. \& Stat. Frontiers (ACEMS), University of Adelaide,
          Adelaide, 5005, SA, Australia}
% \footnotetext[4]{University of Technology
%          Sydney, Ultimo, 2007, NSW, Australia.}
% \footnotetext[5]{Internet Initiative Japan (IIJ) Research Lab, Tokyo, Japan.}
\renewcommand{\thefootnote}{\arabic{footnote}}
 
\section{Introduction} 

\begin{quote}
  {\em I used to feel guilty in Cambridge that I spent all day playing
  games, while I was supposed to be doing mathematics. Then, when I
  discovered surreal numbers, I realized that playing games IS math.}\\
  \hspace*{7mm} John  Horton Conway
\end{quote}

Surreal numbers, invented by John Horton Conway
\cite{conway2000numbers}, and named by Knuth in ``Surreal Numbers: How
Two Ex-Students Turned on to Pure Mathematics and Found Total
Happiness" \cite{knuth1974surreal}, are appealing because they provide
a single construction for all of the numbers we are familiar with and
many others -- the reals, rationals, hyperreals and ordinals -- with
only the use of set theory. Thus they provide an underpinning idea of
what a number really is. And they have an elegant and complex
structure that is worthy of study in its own right.

Surreal numbers aren't numbers as we are taught in grade school, but
they have many of the same properties. The tricky thing is that they
are defined recursively from the very start.
 
Recursion is like the joke: ``an American, an Englishman and an
Australian walk into a bar, and one of them says 

\hspace{5mm} ``\parbox[t]{0.8\textwidth}{an American, an Englishman and
  an Australian walk into a bar, and one of them says,}

\vspace{1mm} \hspace{14mm} ``\parbox[t]{0.8\textwidth}{an American, an Englishman and
  an Australian walk into a bar, and
  one of them says, ...''} \\[1mm]
% 
% serpent Ouroboris -- the snake biting his own tail. 
A recursive definition means that a surreal number is defined in terms
of other surreals and so on. The break in this circularity that makes
it possible to get a foothold is that each turn of the circle
progresses inexorably towards 0, which can be defined {\em ab
  initio}. To continue the Latin the surreal numbers violate the
dictum {\em ex nihlo nihil fit}, or ``from nothing comes nothing.''
From 0 springs forth all other numbers.

The definition that performs this miracle is as follows: a surreal
number $x$ consists of an ordered pair of two sets of surreal numbers
(call them the left and right sets, $X_L$ and $X_R$, respectively)
such that no member of the left set is $\geq$ any of the members of
the right set. We write $x = \{ X_L | X_R\}$ for such a surreal.

This seems a difficult definition. We haven't even defined $\geq$, and
yet are using it in the definition.  Everything resolves because we
can always work with empty sets.  The starting point -- the first
surreal number -- is $\{ \phi | \phi\}$ (where $\phi$ is the empty
set).  A careful reading of the definition says that no elements of
one can be $\geq$ the other, but as there {\em are} no elements, the
comparison is automatically true. We call this number $\overline{0}$.

Then on the ``first day" a new generation of surreals can be created,
building on $\overline{0}$. On the second day we create a second
generation and so on. Each has a meaning corresponding to traditional
numbers in order to have a consistent interpretation with respect to
standard mathematical operators such as addition. The construction is
elegant, and surprisingly general, and leads naturally to the idea of
the ``birthday'' of a surreal numbers being literally the day on
which it is born.
   
A natural question then is, when we perform arithmetic on surreal
numbers, what is the birthday of the result? This paper answers that
question.

\section{The Surreal Numbers and Their Birthdays}
 
We won't describe the surreals in detail here; there are several good
tutorials or books, \eg
\cite{conway2000numbers,knuth1974surreal,tondering13:_surreal_number_introd,grimm12:_introd_surreal_number,14:_proof_conway_simpl_rule_surreal_number,simons17:_meet_the_surreal_number}. In
particular, T{\o}ndering~\cite{tondering13:_surreal_number_introd} and
Simons~\cite{simons17:_meet_the_surreal_number} provide excellent
introductions. We need provide a little background though.  For
instance, notation varies a little: here we denote numbers in lower
case, and sets in upper case, with the convention that $X_L$ and $X_R$
are the left and right sets of $x$, and we write a form as
$x = \{ X_L \mid X_R \}$. I'd like to make a
surrealist/computer-science joke here, namely {\em `{\tt |}' is not a
  pipe}, but a conjunction between Magritte and Unix might be
considered too obscure even for a paper on surreal numbers.  More
prosaically, it is common to omit empty sets, but I prefer writing
$\phi$ explicitly because it is a little clearer when writing
complicated sequences.

In much of the literature the idea of a surreal number is interwoven
with its form.  This is best explained by an analogy to rational
numbers. We can write a rational number in many ways, \eg $1/2 = 2/4$.
That is, we have many \emph{forms} of the same
\emph{number}. Likewise, a surreal number can have many forms. Here,
we work with the form because it is in terms of these that Conway's
surreal arithmetic operators are defined. The distinction requires a
clarification of the notion of equality. Following
Keddie~\cite{keddie94:_ordin_operat_surreal_number} we call two forms
\emph{identical} if they are the same form (\ie have identical
left and right sets), and \emph{equal} if they have the same value
(\ie they denote the same number). We shall distinguish these two
cases by writing equality of value as equivalence, $\equiv$, and
identity by $==$. A single equal sign will be reserved for
conventional numbers.

% We very quickly need a definition of $\geq$ for surreal numbers: apart
% from the use in the very definition, it is the basis for comparing
% values, and hence determining equivalence. 

% \begin{definition}
%   \label{def:leq}
%   For surreals $x$ and $y$ we define $y \geq x$ to mean that no member
%   of $X_L$ is greater than or equal to $y$, and no member of $Y_R$ is
%   less than or equal to $x$.
% \end{definition}
 
% Once again this definition is recursive. Once again it has a bedrock
% case because comparisons of empty sets are automatically true. 

% The relation $\geq$ is defined in terms of the forms, but it also
% means that same thing as in conventional arithmetic, \ie two surreal
% numbers will satisfy the relationship if their values do.

% % The difference between considering the values or their forms can be
% % seen in the fact that the surreals are commonly referred to as a
% % \emph{totally ordered} set, but if we allow that different forms with
% % the same value are equivalent but not equal then the ordering is
% % \emph{weak}, \ie the ordering allows that some elements are ``tied''
% % in value. 

The first surreal number form to be defined is
$\overline{0} \defequals \{ \phi \mid \phi \}$.  We call this number
$\overline{0}$, because it will turn out to be the additive identity
(the 0 of conventional arithmetic).  All other numbers are defined
from this point, following a construction to be laid out below. The
line over the 0 denotes that this is a special, \emph{canonical} form
of zero.

The second two surreal number forms, the numbers we can define on Day
1, immediately after creating $\overline{0}$, are
\[ \overline{1}  \defequals\{\overline{0} \mid \phi\} 
 \;\; \mbox{ and } \;\;
   \overline{- \! 1} \defequals \{\phi \mid \overline{0} \}. 
\]
This notation, however, hides some of the structure of the
surreals. To see them in all their glory we should write
\[ \overline{1}  \defequals \big\{ \{ \phi \mid \phi \} \; \big| \; \phi\big\} 
      \;\;   \mbox{ and }  \;\;
    \overline{- \!1}  \defequals \big\{\phi \; \big| \; \{ \phi \mid \phi \}\big\}, 
\] 
but no doubt you can see that this will quickly result in a very
complicated expressions. We will resolve this by drawing pictures such
as in \autoref{fig:x1}(a). The figure shows each surreal as a node in
a graph. It is (almost) a connected Directed Acyclic Graph (DAG) with
links showing how each surreal is constructed from its
\emph{parents}. But a DAG, by itself, would loose information. The
graph would only specify parents, not left and right parents. So in
displaying the DAG, we show a box for each surreal number, with the
value given in the top section, and the left and right sets shown in
the bottom left and right sections, respectively. From each member of
each set we show a link to its box, and its parents in turn: a red
link indicates a left parent, and blue right. The advantage of the DAG
is that it shows the whole recursive structure of a surreal. 
  
Most aspects of surreals are defined recursively. For instance
$x \geq y$ (which we need even in the definition) means that no
member of $X_L$ is greater than or equal to $y$, and no member of
$Y_R$ is less than or equal to $x$. It might be hard to see how to use
this in the definition when it is also defined in terms of surreals
(which in turn use the definition), but this is the nature of surreal
operations: they are recursive, not just in terms of themselves, but
each definition in turn uses others at lower levels. In any case, it
is now relatively easy to check that $\overline{-1} \leq \overline{0}
\leq \overline{1}$, and we can define further comparisons, for
instance $x \equiv y$ means $x \geq y$ and $y \geq x$. 

Once we have defined $\overline{\pm 1}$, we can proceed to define yet
more surreals. \autoref{fig:x1}(b) shows another form equivalent to
zero, \ie
$\{ \overline{-1} | \overline{1} \} \equiv \{ \phi | \phi \} ==
\overline{0}$.
The graph shows that a value can reappear at multiple places in the
structure of the form: in this case 0 appears both at the top and the
bottom of the DAG.  The information characterising the surreal form is
not its value, or even the values of its subsets, but the structure of
the whole DAG that describes it. So the two `0' nodes in the graph are
different (non-identical) surreal forms that just happen to have the
same value.

\begin{figure}[!t] 
  \centering    
  \begin{tabular}{ccc} 
   \includegraphics[width=0.24\textwidth]{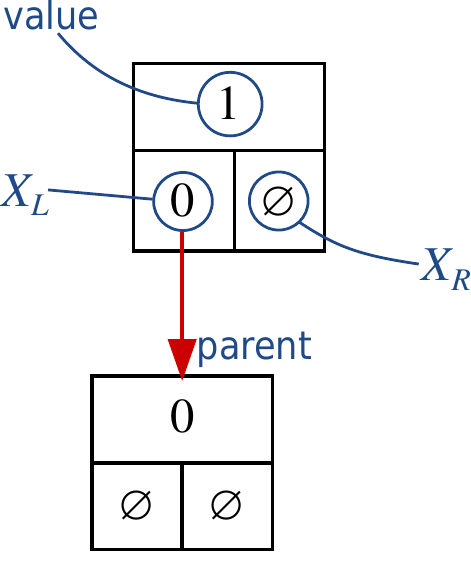}
    \hspace{7mm}
    &  
       \includegraphics[width=0.19\textwidth, trim={12mm 12mm 12mm 12mm}]{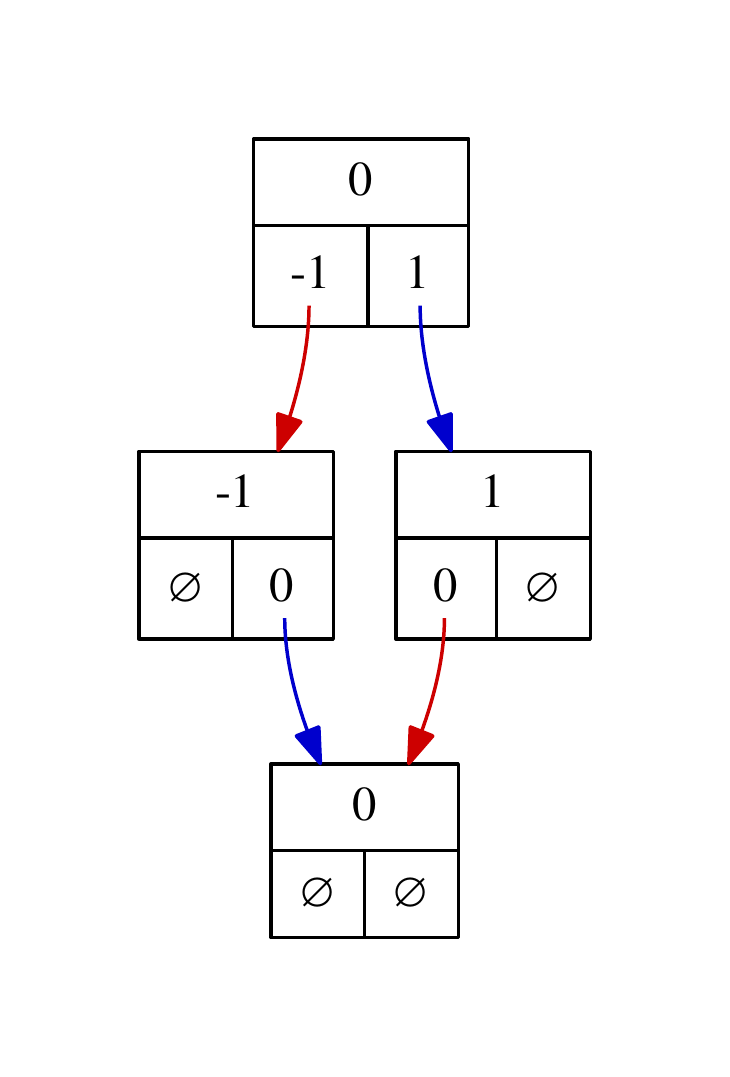}
    \\
   % &  
   % \hspace{3mm}
   %    \includegraphics[width=0.18\textwidth]{/home/mroughan/Dropbox/src/Julia/SurrealNumbers/test/Data/simple_surreal_3.pdf}
       % \includegraphics[width=0.23\textwidth]{/home/mroughan/Dropbox/src/Julia/SurrealNumbers/test/Data/test_dag_s2.dot.pdf}
    \\ 
   (a) $\overline{1} == \{\overline{0} \mid \phi\}$. &  
   (b) $0 == \{\overline{-1} \mid \overline{1}\}$. \\
  %& 
  % (c) $\overline{3/4} == \{ \overline{1/2} \mid \overline{1} \}$.  \\
  \end{tabular}  
  \caption{DAGs depicting the forms of two surreal numbers. Each box
    represents a surreal number with the value given in the top
    section, and the left and right sets shown in the bottom left and
    right sections, respectively. The arrows show the form of each
    parent surreal, with its own recursive structure. Note that there
    are equivalent forms, \eg
    $\{ \phi \mid \phi \} \equiv \{ \overline{-1} \mid 1 \}$, even within a single
    DAG. }
  \label{fig:x1}
\end{figure}   
 
% The third example in \autoref{fig:x1} shows a standard construction of
% 3/4 as a surreal
% \begin{equation}
%   \label{eq:3on4}
%    \frac{3}{4}   
%     == \big\{ 1/2 \; \big| \; 1 \big\}
%     == \Big\{   \big\{   \{  \phi  \mid  \phi  \}   \mid   \{   \{  \phi  \mid  \phi  \}   \mid  \phi  \}   \big\}  \; \big| \;   \{   \{  \phi  \mid  \phi  \}   \mid  \phi  \}   \Big\}.
% \end{equation}
% We can see that the DAG is a better way to understand this expression
% than a complicated series of brackets and empty sets. However, the
% shorthand version also included above is not better because it hides
% the true structure of the form.  We can, for instance, see that the 
% '1' used in each case of \autoref{fig:x1} is same the identical form.
 
Each number is actually an infinite equivalence class of forms, so we
need to have standard, \emph{canonical} forms, at least to bootstrap
later work.  The standard construction (called the Dali function by
Tondering~\cite{tondering13:_surreal_number_introd}) maps
\emph{dyadic} numbers $\D := \{ n / 2^k \mid n, k \mbox{ integers} \}$ to
(finite) surreals, and is defined recursively by
$d: \D \rightarrow \Ss$ where
\begin{equation}
  \label{eq:dali}
\renewcommand{\arraystretch}{1.2}
 d(x) = \left\{ \begin{array}{ll}
                    \{ \phi \mid \phi \}, & \mbox{if } x = 0, \\
                    \big\{ d(n-1) \mid \phi \big\}, & \mbox{if $x=n$, a positive integer} , \\
                    \big\{ \phi \mid d(n+1) \big  \}, & \mbox{if $x=n$, a negative integer} , \\
                    \left\{ d\left( \frac{n-1}{2^k} \right) \, \right| \left. d\left( \frac{n+1}{2^k} \right) \right\},  \!\!\!\! &
                        \mbox{if } x = n/2^k \mbox { for } k > 0 \mbox{ and } n \mbox{ odd}.
                  \end{array}
                  \right.  
\end{equation}
For convenience, we denote canonical forms through the shorthand of
placing a line above the number. All other forms are defined by their
DAG, or in terms of canonicals.

This recursive construction is often illustrated as a tree
\cite{2016arXiv160803413M,grimm12:_introd_surreal_number,preston14:_surreal,ehrlich11:_conway}
showing the numbers that are created in each generation and their
position on the real number line. However, that is misleading. The
(non-integer) Dali surreals have two parents, and the resultant
structure of dependency in the recursion is the DAG shown in
\autoref{fig:dyadic_tree}. We can see from this, for instance, that
each canonical form has either one or two parents. 

% The picture illustrates the Dali construction, \ie that $\pm 1$ are
% built on 0, and $2$ on top of 1 and so on, but that fractions are
% built from two parents, \eg $1/2$ is built jointly on 0 and 1. For
% example the form we get for $3/4$ is shown in \autoref{eq:3on4}, which
% shows both shorthand representation of the Dali-form of 3/4, which can
% be misleading but is more common in the literature, and the fully
% expanded form. You can perhaps see why it is preferable to view the
% graph representation shown in \autoref{fig:x1}(c) over the full
% form. However, shorthand is too useful to omit completely, so we need
% a little piece of extra notation here, which is that we shall write
% $\overline{3/4}$ for the Dali construction of 3/4, and so on for other
% Dali-esque forms. This makes all of the shorthand more meaningful. For
% instance, we can write expression such as $\overline{3/4} == \{
% \overline{1/2} \mid \overline{1} \}$.  

The construction of surreals leads to the notion of \emph{birthdays}:
take $\overline{0}$ to be born on Day 0, and $\overline{\pm 1}$ to be
born on Day 1, and so on, then we can assign a birthday to all
surreals.  I prefer the term \emph{generation} over birthday if only
because it links up to the notion of parents and children more
cleanly.  The birthday can also be seen as how deeply you must recurse
through the DAG structure to get to $\overline{0}$.  We say a surreal
is older if it comes from an earlier generation, \ie it has a smaller
(earlier) birthday. We formalise notions these below.
 
\begin{figure*}[!t] 
  \centering
  \includegraphics[width=1.1\textwidth]{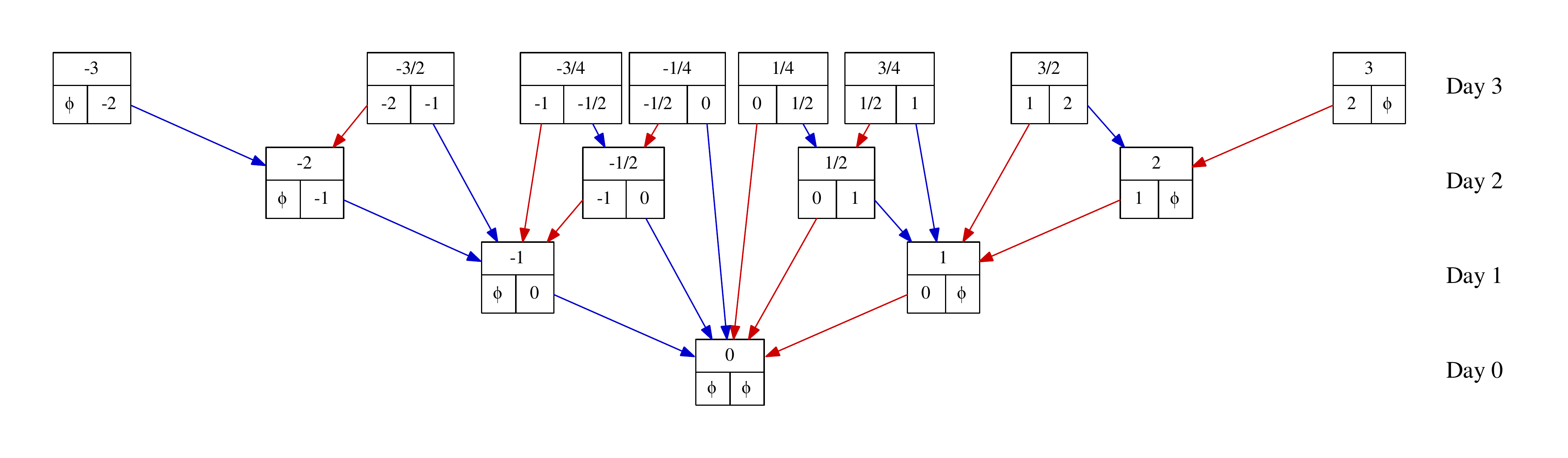}
  \caption{The dyadic DAG, \ie the recursive structure of the dyadic
    surreal numbers, up to birthday/generation 3.}
  \label{fig:dyadic_tree}
\end{figure*}
 
\begin{definition}
  We refer to the elements of the left and right sets of a surreal
  form $x$ as its \emph{parents} (note there may be other than two
  parents), and $x$ as their \emph{child}.  The \emph{birthday} or
  \emph{generation} of $\{\phi \mid \phi \}$ is 0, and the generation
  of all other finite surreal forms is 1 greater than that of their
  youngest parent.
\end{definition}

% Parents and children are illustrated in \autoref{fig:x1}. Note that in
% this tree representation the same surreal form can appear more than
% once, as in the `1' appearing in the form for 3/4. 

\noindent If we denote the parents of $x$ by $X_P = X_L \cup X_R$,
then the generation/birthday function of a surreal number
form $x$ is given by
\begin{equation}   
  \label{eq:rho_def}
  g(x) = \sup_{x_p \in X_P} g(x_p) + 1,
\end{equation}  
where $g( \{ \phi \mid \phi \} ) = 0$.  
% In terms of $g(\cdot)$, a
% surreal form $x$ is  \emph{older} than $y$ means
% $g(x) < g(y)$. 
% [Simplicty \cite{grimm12:_introd_surreal_number}] 
% Thus the generation function maps surreal forms to non-negative
% integers (for short surreals). 
For example, in \autoref{fig:x1}(b)
$g\big(\{\overline{-1} \mid \overline{1} \}\big) = g(\overline{1}) + 1 = 2$.

Surreal forms with equivalent values can come from different
generations, so knowing the value of a surreal tells us only a lower
bound on its generation (namely the generation of the canonical form
of that surreal). Thus, some questions arise in regard to birthdays:
\eg can we derive birthdays for standard surreal constructs?

We call these problems in \emph{birthday arithmetic}. 

We will start by deriving the birthday of the canonical forms as it
provides an example of the standard proof structure for many surreal
arguments. Simons  states the result
\cite[p.27]{simons17:_meet_the_surreal_number} but only as a minor
note within a larger result.  As in many proofs in this domain, it is
inductive. Throughout this we use $g(x)$ as shorthand for $g(d(x))$.

\begin{lemma}
  \label{lem:canonical}
  The generation/birthday of the canonical form of dyadic $x = n/2^k$,
  which is in irreducible form (or lowest terms) is
  \[ g(x) = \big\lceil \, |x| \, \big\rceil + k, \]
  where $\lceil x \rceil$ denotes the ceiling function of $x$ (the
  smallest integer larger than $x$).
\end{lemma}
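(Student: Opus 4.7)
The plan is to prove the formula by induction on $k$, the denominator exponent of $x$ when written in lowest terms. For the base case $k = 0$, $x$ is an integer $n$, and the Dali recursion $d(n) = \{d(n-1) \mid \phi\}$ gives $g(n) = g(n-1) + 1$ for $n > 0$, which iterates down to $g(0) = 0$ to yield $g(n) = |n| = \lceil |n| \rceil + 0$; the case $n < 0$ is symmetric.

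For the inductive step, fix $k \geq 1$ and let $x = n/2^k$ in lowest terms, so $n$ is odd. By the symmetry of the construction under negation I would assume $n > 0$ and set $c = \lceil x \rceil$. The canonical form of $x$ has exactly the two parents $x_\pm = (n \pm 1)/2^k$, so $g(x) = \max(g(x_-), g(x_+)) + 1$. The crucial step is to reduce each parent to lowest terms via $2$-adic valuations: if $v_2(n-1) = a$ and $v_2(n+1) = b$, then in lowest terms $x_\pm$ has denominator exponent $\max(0, k - a)$ and $\max(0, k - b)$, both strictly less than $k$, so the inductive hypothesis applies. The key arithmetic observation is that $n - 1$ and $n + 1$ are consecutive even integers, so exactly one of $a, b$ equals $1$ while the other is $\geq 2$; thus one parent attains reduced denominator exponent exactly $k - 1$, while the other attains at most $k - 2$. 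A short check shows $\lceil x_\pm \rceil = c$ for the parent with denominator exponent $k-1$, so by induction that parent has generation $c + (k - 1)$, which dominates; hence $g(x) = c + (k - 1) + 1 = c + k = \lceil |x| \rceil + k$.

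The main obstacle is tracking the edge cases coherently rather than any deep calculation: the case $n = \pm 1$, where one parent is $0$ and the $v_2$ argument degenerates (one verifies directly that the non-zero parent is $1/2^{k-1}$ with generation $k$ by induction); the case $k = 1$, where both parents are integers so the inductive hypothesis reduces to the base case; and the verification of $\lceil x_\pm \rceil = c$, where the only failure mode $x_- = c - 1$ corresponds to $2^k \mid n - 1$, forcing $x_-$ into the non-dominant branch anyway. Once these cases are dispatched the proof collapses to the identity $\max(g(x_-), g(x_+)) = \lceil |x| \rceil + k - 1$, and the result follows.
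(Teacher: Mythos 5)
Your proof is correct and follows essentially the same route as the paper's: induction through the Dali recursion, writing $g(n/2^k)$ as one plus the larger of the two parents' generations, and showing that the dominant parent has generation $\lceil x \rceil + k - 1$. The difference is one of care rather than strategy. The paper simply asserts that the maximum is attained at $(n+1)/2^k$ and that this parent reduces to denominator exponent exactly $k-1$, so that $g\big((n+1)/2^k\big) = m + k - 1$; taken literally this is not right in all cases (for $x = 3/4$ the parent $(n+1)/2^k = 1$ has generation $1$, and it is the other parent, $1/2$, of generation $2$, that dominates). Your $2$-adic bookkeeping --- exactly one of $n \pm 1$ has valuation $1$, that parent reduces to exponent exactly $k-1$ while retaining the ceiling $c$, and the other parent reduces to exponent at most $k-2$ with ceiling at most $c$ --- identifies the dominant parent correctly in every case, and your explicit handling of $n = \pm 1$ and $k = 1$ covers the degenerate instances the general valuation argument does not reach. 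So your write-up is, if anything, a sharper version of the paper's argument; organizing the induction on the reduced exponent $k$ (with a sub-induction over the integers for the base case) rather than structurally over parents is a cosmetic difference, since the parents of a reduced $n/2^k$ always have strictly smaller reduced exponent.
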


\begin{proof}
  The statement is true for $x = \overline{0}$ because
  $g(\overline{0}) = 0$ by definition. The negative case can be
  treated by considering $g(-x)$ (see \autoref{sec:negation}), so we
  only consider $x>0$ here. The integer case is trivial (see
  \autoref{fig:dyadic_tree}) so we focus on the case $n$ odd and
  $k>0$.  

  Assume for the purpose of induction that the lemma is true for all
  parents of $x$.

  From Definition~\autoref{eq:dali} such a dyadic has exactly two
  parents, and hence \autoref{eq:rho_def} reduces to
  \[ 
  g\left(  \frac{n}{2^{k}} \right) 
    = \max\left\{ g\left( \frac{n-1}{2^{k}}\right), g\left(  \frac{n+1}{2^{k}} \right) \right\}  + 1.
  \] 
  For $n$ odd and $k>0$ both $x=n/2^{k}$ and $(n+1)/2^{k}$ have the
  same ceiling (call it $m$), and $\lceil (n-1)/2^{k} \rceil \leq m$,
  so by the inductive hypothesis
  \[
  g\left(  \frac{n}{2^{k}} \right) 
  =  g\left(  \frac{n+1}{2^{k}} \right)  + 1 .
  \]
  Now $n+1$ is even so we can simplify $(n+1)/2^k = \ell/2^{k-1}$, and
  by the inductive hypothesis the theorem is true for the parents of
  $x$, so we must have
  \[  g\left(  \frac{n+1}{2^{k}} \right) = m + k - 1. \]
  Hence $ g\left(n/2^{k} \right) = m + k$. 
  % \begin{eqnarray*}
  %   g\left(  \frac{n}{2^{k}} \right)
  %   & = & \max\left\{ g\left( \frac{n-1}{2^{k}}\right), g\left(  \frac{n+1}{2^{k}} \right) \right\}  + 1\\
  %   & = & g\left(  \frac{n+1}{2^{k+1}} \right)  + 1 \\
  %   & = & g\left(  \frac{(n+1)/2}{2^{k}} \right)  + 1 \\
  %   & = & m + k + 1,
  % \end{eqnarray*} 
\end{proof}
 
Intrinsic to this proof (and others) is the fact that $\overline{0}$
is the starting point for the construction of the surreals, and
therefore is the ultimate ancestor of all surreals.

\section{Addition and Subtraction}
 
In order for the surreals to fulfil their role as ``numbers'' they
must be able to play all the tricks of numbers, for instance, we must
be able to do arithmetic. Conway defined addition and subtraction for
surreal numbers, and showed these satisfy the conditions required, but
they are actually operations on the forms. Let us examine them in
detail below.
 
\subsection{Addition} 
 
The standard definition of addition on surreal forms
\cite{conway2000numbers,tondering13:_surreal_number_introd} is  
\begin{eqnarray*}
  x + y      &  \defequals  & \{ X_L + y \cup x + Y_L \mid X_R + y \cup x + Y_R \}.
\end{eqnarray*}
Notation is often abbreviated, and so you will sometimes set
operations simplified, \eg $\{ x, y \} \cup A$ is written
$\{x, y, A\}$. Also, in this and other definitions we implicitly
extend the operators to sets, or combinations of sets and set with
surreals, \eg $x+y$ is comprised of terms like $X_L + y$.  Operations
on sets are applied to each member
\cite{tondering13:_surreal_number_introd}:
\begin{eqnarray*}
%  - \{ x_1, x_2, \ldots, x_n \} & \defequals & \{ -x_1, -x_2, \ldots, -x_n \}, \\
  \{ x_1, x_2, \ldots, x_n \} + y & \defequals  & \{ x_1+y, x_2+y, \ldots, x_n+y \},
\end{eqnarray*}
and operations on empty sets result in empty sets, \ie $x + \phi = \phi$.

Many of the texts on surreals provide proofs that addition satisfies
all of the usual requirements, \eg associativity, commutativity and so
on \eg
\cite{conway2000numbers,tondering13:_surreal_number_introd}. For
instance, 0 is the additive identity
\cite{conway2000numbers,tondering13:_surreal_number_introd}
\[
  x + 0  \equiv 0 + x \equiv x, \\
\]
but this is a statement about values not forms: $x+0$ is not (in
general) identical to x, except for the canonical zero, \ie
$x + \overline{0} == x$.  For instance, consider the following
addition of $\overline{1/2} + 0$ noting carefully the bars indicating
which terms are canonical.
\begin{eqnarray*}
 \{ \overline{0} | \overline{1} \} + \{ \overline{-1} \mid \overline{1} \} 
   & == & \{ \overline{0} + 0, \;  \overline{1/2} +  \overline{(-1)}
          \mid  \overline{1} + \overline{0}, \; \overline{1/2} +
          \overline{1} \} \\ 
   & == & \{ 0, -1/2 \mid \overline{1}, 3/2 \},
\end{eqnarray*} 
which is not identical to $\{ \overline{0} | \overline{1} \}$. 

% The operators commute \cite{tondering13:_surreal_number_introd}, and
% so we need not prove the reversed identities.  Note the careful choice
% of $\equiv$ in the above expressions. In fact
% \[ x + \overline{0} 
%   == x + \{ \phi \mid \phi \} 
%   == \{ X_L + \overline{0}, x+\phi \mid X_R + \overline{0}, X+\phi \}
%   == x,  
% \]
% using a simple induction. But there are many equivalent forms for
% zero, and their addition doesn't always satisfy the stricter meaning
% of equality. For instance So
% $\overline{0}$ is the additive identity with respect to identity, and
% $0$ is the additive identity with respect to equivalence.

% For instance, intuitively, we might suspect that the
% reason for the inequality above is that one of the forms is
% non-canonical. However, we will show here that arithmetic can generate
% non-canonical forms from purely canonical components. This creation of
% complexity from simple components is one of the reasons for the
% enduring interest in one of Conway's other creations, the ``Game of
% Life.''

% , and $-x$
% as the additive inverse of $x$ so that
% \begin{eqnarray*} 
%   x + 0      & \equiv & x, \\
%   x - x      & \equiv & 0.
% \end{eqnarray*} 

% \subsection{Addition in practice} 

% We can 
% \begin{lstlisting}[escapeinside={(*}{*)}]
%   using SurrealNumbers 
%   x0 = SurrealShort("0", (*$\phi$*),   (*$\phi$*))  
%   x1 = SurrealShort("1", [x0], (*$\phi$*))
%   x1_alt = convert(SurrealShort, 1)
%   x2 = x1 + x1_alt
% \end{lstlisting}
% and the output of the last line will be
% $\{ \{ \{ \phi \mid \phi \} \mid \phi \} \mid \phi \}$, which is just
% $\overline{2}$. 

Another instructive example is $\overline{2} + \overline{2}$. In this
case $X_R = Y_R = \phi$ and so the right-set of
$\overline{2} + \overline{2}$ will also be $\phi$, thus
\begin{eqnarray*}
  \overline{2} + \overline{2}
        % & == & \{ X_L + y , x + Y_L  \mid \phi \} \\
        & == & \{ \overline{1} + \overline{2}, \overline{2} + \overline{1} \mid \phi \} \\
        & == & \{ \overline{3} \mid \phi \}  \\
        & == & \overline{4}.
\end{eqnarray*}
The result is the canonical form of 4. Naively, we might expect that
addition of canonical forms would always lead to the same. However
this is not true. We can play with such hypotheses using the {\tt
  SurrealNumbers} package\footnote{The SurrealNumbers toolkit (v0.1.1)
  is released under the MIT license, and available at
  \url{https://github.com/mroughan/SurrealNumbers.jl}} in the
programming language Julia. For instance, the above calculation can be
performed using the commands
\begin{lstlisting}[escapeinside={(*}{*)}]
  julia> using SurrealNumbers 
  julia> x = dali(2) (* \em // set x to the canonical form of 2 *)
  julia> y = x + x     (* \em \;\; // calculate $\overline{2} + \overline{2}$ *)
\end{lstlisting}  
% The result will be 'true'.  The convert function creates the canonical
% form of a real number input, and '==' tests identity. Thus the above
% tests that the addition $\overline{2} + \overline{2}$ returns the
% canonical form of 4, which it does. However addition is an operation
% on the forms, so we need not always get such a simple answer.
A more complicated example with a non-canonical result is
\[ \overline{1} + \overline{1/2} == \{   \{   \{   \phi   \mid   \phi   \}   \mid   \{   \{   \phi   \mid   \phi   \}   \mid   \phi   \}   \} , \{   \{   \phi   \mid   \phi   \}   \mid   \phi   \}   \mid   \{   \{   \{   \phi   \mid   \phi   \}   \mid   \phi   \}   \mid   \phi   \}   \}.\]
% using SurrealNumbers
% temp = convert(SurrealFinite, 1) + convert(SurrealFinite, 1/2)
% surreal2tex(temp, level=2)
\autoref{fig:addition} shows this, and two other forms with value
3/2. The figure makes it easy to see that the latter two forms are
non-canonical, and we see in (c) a different generation as well.
 
\begin{figure}[!t] 
  \centering    
  \vspace{-5mm}
  \begin{subfigure}[b]{0.29\columnwidth} 
    \centering    
    % trim={<left> <lower> <right> <upper>} 
    \includegraphics[height=1.5\textwidth, trim={10mm 10mm 10mm 10mm}]{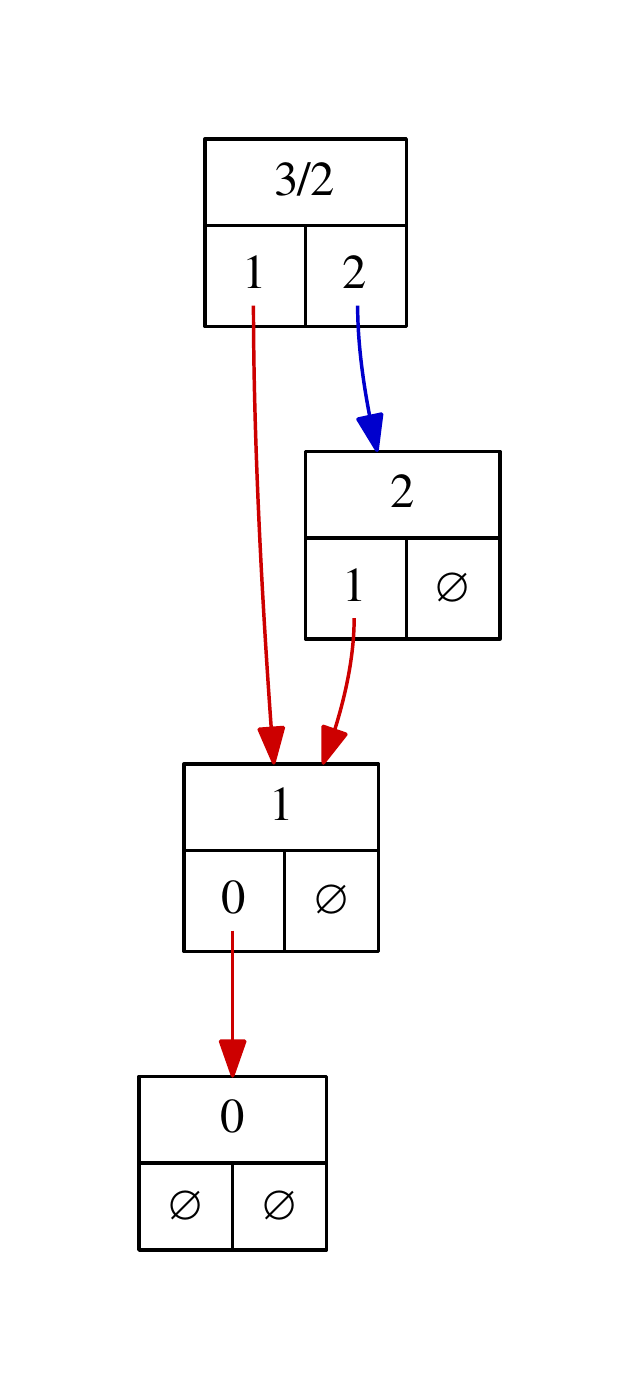} 
    \caption{Canonical form $\overline{3/2}$.}
  \end{subfigure}
  \hspace{0.01\columnwidth} 
  \begin{subfigure}[b]{0.29\columnwidth}
    \centering    
    \includegraphics[height=1.5\textwidth, trim={10mm 10mm 10mm 10mm}]{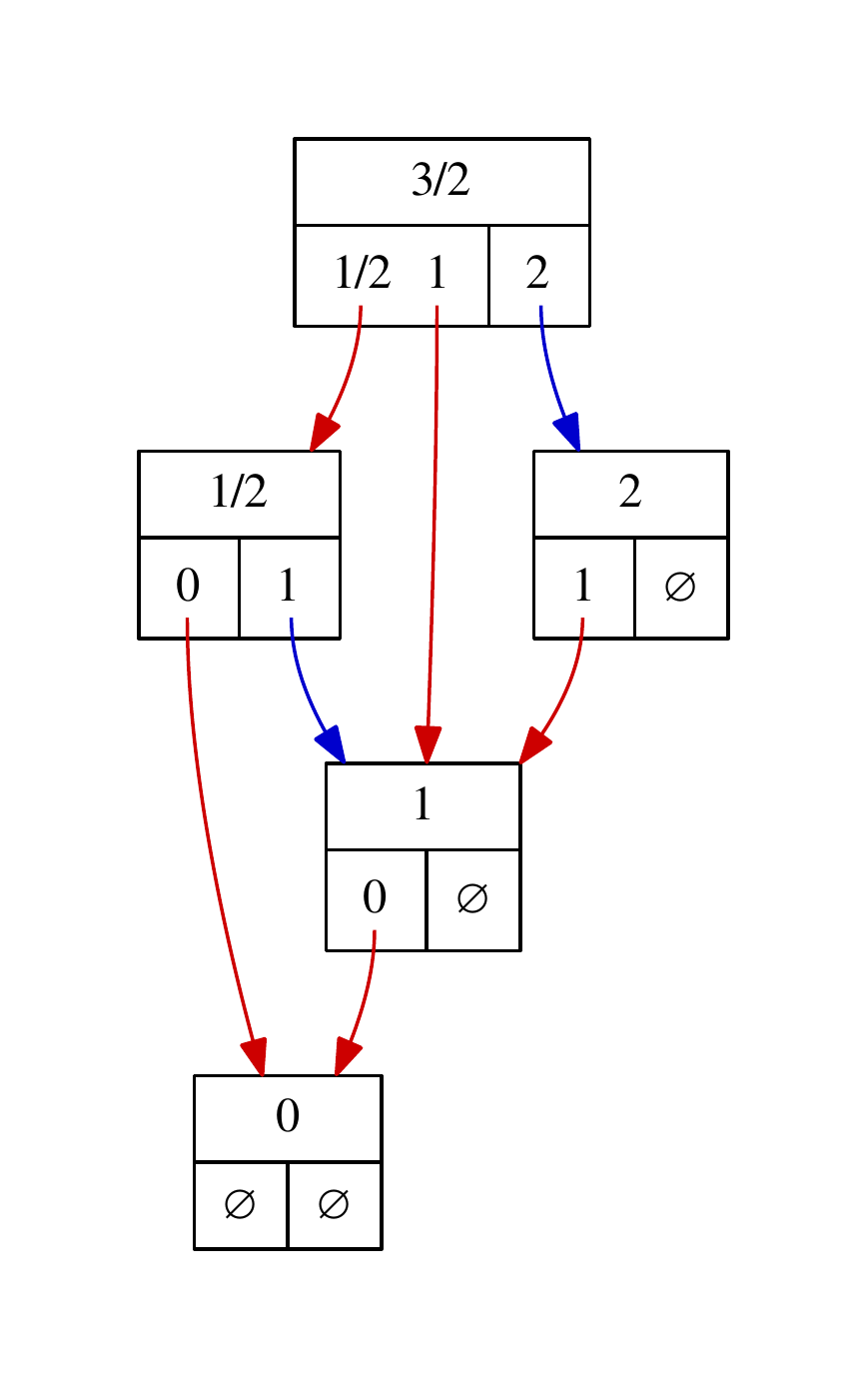} 
    \caption{The form of $\,\overline{1}$ + $\overline{1/2}$.}
  \end{subfigure}
  \hspace{0.01\columnwidth}  
  \begin{subfigure}[b]{0.35\columnwidth}
    \centering    
    \includegraphics[height=1.9\textwidth, trim={10mm 10mm 10mm 10mm}]{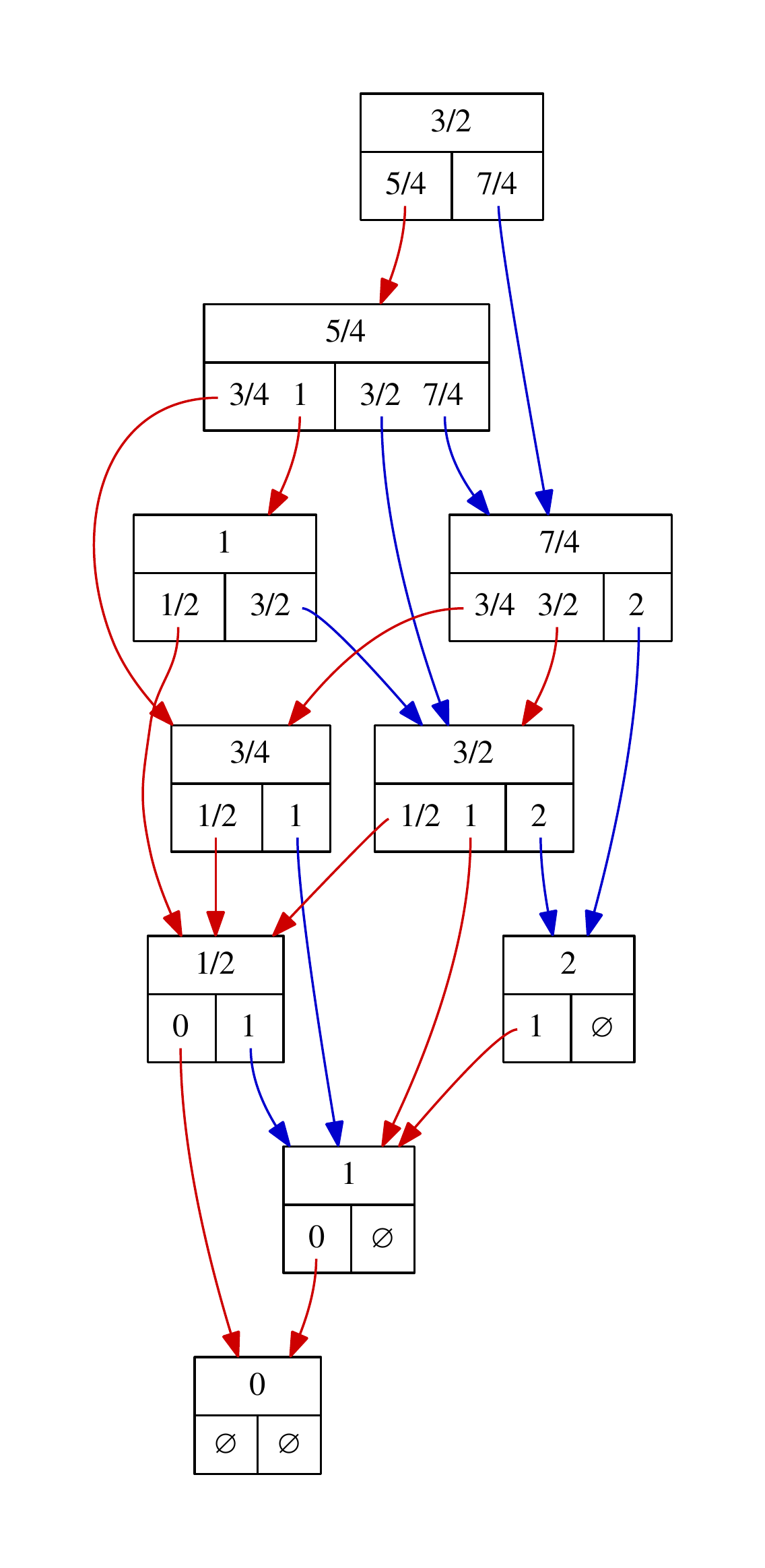} 
    \caption{The form of $\,\,\overline{3/4}$ + $\overline{3/4}$.}
  \end{subfigure}
  \caption{Graphs depicting three forms for the surreal number 3/2. It
    is interesting that the form in (b) appears as a subgraph of the
    form in (c). An open question concerns whether this is true in
    general: is the canonical form always a subgraph of any
    alternative form of the same number?}
  \label{fig:addition}
\end{figure}

That brings us to the nub of the problem -- can we calculate the
generation/birthday of the sum purely from the generation of the
inputs? Simons
proves~\cite[p.25]{simons17:_meet_the_surreal_number}
% \footnote{Gonshor
%   \cite[Theorem 6.1]{gonshor_1986} shows a similar result for bounds
%   on the lengths of sign-sequence representations of surreal numbers,
%   but this is not quite equivalent to a statement about the
%   birthday/generation of a surreal form in the subset
%   representation. In Gonshor's case, the bound certainly is an
%   inequality.}. 
that $g(x+y) \leq g(x) + g(y)$, but we present a small set of examples
in \autoref{tab:addition_birthday}, and in all of these (and every
other case tested) we find $g(x+y) = g(x) + g(y)$. The question then
is, can we prove that equality is always the case? The answer
follows. 
 
\begin{table}[th]
   \centering
   \renewcommand{\arraystretch}{1.2}
   \begin{tabular}{cc|rrrrr|l}
     \hline
       \abrule
       &    & $\overline{0}$ & $\overline{1/2}$ & $\overline{3/4}$ & $\overline{1}$ & $\overline{2}$ & $x$     \\
    $y$  & $g(y)$   & 0 & 2 & 3 & 1 & 2 & $g(x)$  \\
       \hline
       \abrule
     %\multirow{4}{*}{$y$}
      $\overline{0}$   & 0 &   0  & 2  & 3 & 1  & 2 \\
      $\overline{1/2}$ & 2 &   2  & 4  & 5 & 3  & 4 \\
      $\overline{3/4}$ & 3 &   3  & 5  & 6 & 4 & 5  \\
      $\overline{1}$   & 1 &   1  & 3  & 4 & 2  & 3 \\
      $\overline{2}$   & 2 &   2  & 4  & 5 & 3  & 4 \\
    \hline
       \brule
   \end{tabular}
   \caption{The birthday table $g(x+y)$, \ie the birthdays of the
     sums of the $x$ and $y$ values in the columns and rows. Note that
     they are additive, \ie $g(x + y) = g(x) + g(y)$. }
   \label{tab:addition_birthday}
\end{table} 

\begin{theorem}[Birthday addition theorem]
  \label{thm:birth_add}
  For two surreal numbers $x$ and $y$
  \[ g(x + y) = g(x) + g(y), \]
  where $g(\cdot)$ is the birthday/generation function. 
\end{theorem}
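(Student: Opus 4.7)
The plan is to prove the claim by strong induction on $g(x)+g(y)$. The base case, $g(x)+g(y)=0$, forces $x == y == \{\phi \mid \phi\}$, and the recursive definition of addition then gives $x+y == \{\phi \mid \phi\}$, so $g(x+y) = 0 = g(x)+g(y)$.

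For the inductive step I would expand $x+y$ via Conway's recursive definition and apply \autoref{eq:rho_def} to obtain
\[
g(x+y) = 1 + \max\{g(x_p+y),\, g(x+y_p) : x_p \in X_P,\, y_p \in Y_P\}.
\]
Each of the pairs $(x_p,y)$ and $(x,y_p)$ appearing here has generation sum strictly less than $g(x)+g(y)$, because every parent has strictly smaller generation than its child. The inductive hypothesis therefore gives $g(x_p+y) = g(x_p)+g(y)$ and $g(x+y_p) = g(x)+g(y_p)$, which is the step that turns a statement about forms into ordinary arithmetic on the right-hand side.

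To close the loop, if both $X_P$ and $Y_P$ are non-empty, then $\max_{x_p \in X_P} g(x_p) = g(x)-1$ and $\max_{y_p \in Y_P} g(y_p) = g(y)-1$ by the definition of $g$, so
\[
g(x+y) = 1 + \max\{(g(x){-}1)+g(y),\; g(x)+(g(y){-}1)\} = g(x)+g(y).
\]

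The main obstacle I anticipate is the bookkeeping for the degenerate cases in which $x$ or $y$ is identical to $\overline{0}$, so that $X_P$ or $Y_P$ is empty and some terms simply drop out of the max. A separate check is needed for each such case, but the algebra still collapses to $g(x)+g(y)$ (for instance, if $X_P = \phi$ and $Y_P \neq \phi$, only the $g(x)+g(y_p)$ terms remain, and $g(x+y) = 1 + (g(y)-1) = g(y) = g(x)+g(y)$). Apart from these cases the argument is mechanical: the induction is well-founded because each recursive appeal strictly decreases $g(x)+g(y)$, and the max telescopes in a single step. As a by-product this simultaneously yields the $\leq$ direction already shown by Simons and the new $\geq$ direction.
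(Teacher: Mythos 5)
Your proposal is correct and follows essentially the same route as the paper's proof: expand $g(x+y)$ over the parents $x_p + y$ and $x + y_p$, apply the inductive hypothesis to turn each term into $g(x_p)+g(y)$ or $g(x)+g(y_p)$, and use $\sup_{x_p} g(x_p) = g(x)-1$ (and likewise for $y$) to collapse the max. The only differences are cosmetic: you make the well-founded measure $g(x)+g(y)$ explicit and spell out the degenerate $\overline{0}$ cases that the paper dismisses as trivial.
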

\begin{proof}
  Birthday addition is trivially true if $x$ or $y = \overline{0}$.
  Presume that the theorem is true for all combinations of the parents
  of summands $x$ and $y$ and the summands themselves (excepting $x+y$
  itself). Apply Definition \autoref{eq:rho_def} to addition and we
  get
  \[
    g(x+y) = \sup_{x_p, y_p} \Big\{ g(x + y_p) + 1,  g(y + x_p) + 1 \Big\}. 
  \]
  Now, by the inductive hypothesis, birthday addition is true for all
  parents and their combinations so the above reduces to 
  \begin{eqnarray*}
    g(x+y) 
     & = & \sup_{x_p, y_p} \Big\{ g(x) + g(y_p) + 1, g(y) + g(x_p) + 1 \Big\} \\
     % & = & \max\Big\{ \sup_{y_p} \{ g(x) + g(y_p) + 1\}, \sup_{x_p} \{ g(y) + g(x_p)  + 1\} \Big\}  \\
     & = & \max\Big\{ g(x) + \sup_{y_p} \{ g(y_p)\} + 1, g(y) + \sup_{x_p} \{ g(x_p)\}  + 1 \Big\}  \\
     & = & \max\Big\{ g(x) + g(y), g(x) + g(y) \Big\} \\
     & = & g(x) + g(y).
  \end{eqnarray*}
  Thus by induction the result. 
\end{proof}

\subsection{Negation and subtraction} 

Negation and subtraction are defined together as 
\[ -x \defequals  \{ -X_R \mid - X_L \}
    \;\; \mbox{and} \;\;
   x - y \defequals x + (-y).
\]
Subtraction looks as simple as addition, but can complicate matters
more than one might think with respect to surreal forms because the
number of empty sets in the results change, and we end up with more
complicated expressions. For instance $\overline{1} - \overline{1}$ is
equivalent to 0 (this is the form that is illustrated in
\autoref{fig:x1}(b)), but is not identical to $\overline{0}$. Thus
$-x$ is the additive inverse of $x$ in terms of equivalence, but not
identity. 

However, here we are interested in the birthday/generation of the
output. We can construct a very simple inductive proof that
$g(-x)=g(x)$, using the definition above. Incidentally, this concludes
the proof of \autoref{lem:canonical}. 
\label{sec:negation}

From this, the definition of subtraction and \autoref{thm:birth_add}
we immediately get the following corollary.

\begin{corollary}[Birthday subtraction corollary]
  \label{thm:birth_sub}
  For two surreal numbers $x$ and $y$
  \[ g(x - y) = g(x) + g(y), \]
  where $g(\cdot)$ is the birthday/generation function. 
\end{corollary}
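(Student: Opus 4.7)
The plan is to reduce the corollary to two ingredients already in hand: the defining identity $x - y \defequals x + (-y)$, and the Birthday addition theorem (\autoref{thm:birth_add}). Once the auxiliary fact $g(-y) = g(y)$ is in place, a one-line chain $g(x-y) = g(x + (-y)) = g(x) + g(-y) = g(x) + g(y)$ closes the argument. So essentially all the work is in justifying $g(-y) = g(y)$, which the preceding paragraph asserts but does not spell out.

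I would prove $g(-y) = g(y)$ by induction on the generation of $y$, following exactly the structure used for \autoref{lem:canonical} and \autoref{thm:birth_add}. The base case is $y = \overline{0}$: here $-\overline{0} = \{ -\phi \mid -\phi \} == \{ \phi \mid \phi \} = \overline{0}$, so both sides are $0$. For the inductive step, observe that by the negation definition $-y \defequals \{ -Y_R \mid -Y_L \}$, the parent set of $-y$ is exactly $-Y_P$ where $Y_P = Y_L \cup Y_R$. Applying \autoref{eq:rho_def} together with the inductive hypothesis $g(-y_p) = g(y_p)$ for every $y_p \in Y_P$ gives
\[
  g(-y) = \sup_{y_p \in Y_P} g(-y_p) + 1 = \sup_{y_p \in Y_P} g(y_p) + 1 = g(y).
\]

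With that lemma secured, the corollary follows by substitution: $g(x - y) = g(x + (-y))$ by the definition of subtraction, $= g(x) + g(-y)$ by \autoref{thm:birth_add}, $= g(x) + g(y)$ by the negation fact.

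Honestly, there is no real obstacle; the only choice is presentational. The cleanest route is probably to package the negation induction as a one-sentence remark (as the author does in \autoref{sec:negation}) and then state the corollary's proof as a two-line computation, rather than formalising the negation lemma separately.
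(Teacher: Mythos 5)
Your proposal is correct and matches the paper's argument exactly: the paper also derives the corollary from the definition $x - y \defequals x + (-y)$, the Birthday addition theorem (\autoref{thm:birth_add}), and the inductive fact $g(-x) = g(x)$ sketched in \autoref{sec:negation}. Your only addition is spelling out that negation induction, which the paper leaves as a remark.
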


\section{Multiplication}
 \label{sec:mult}

If you thought addition and subtraction were complicated then fasten
your seat belts. Multiplication is defined by
\begin{multline}
  \label{eqn:multiplication}
  x y   \defequals \Big\{ 
                               \{ X_L \, y + x Y_L - X_L Y_L\} \cup
                               \{X_R \, y + x Y_R - X_R Y_R\}
                                \; \Big| \;\\
                                \{X _L \, y + x Y_R - X_L Y_R\} \cup
                               \{ X_R \, y + x Y_L - X_R Y_L\}                                
                            \Big\}.  
\end{multline}
We need to be clear about exactly what each term in this definition
means because it \emph{does not} follow the typical convention for
expressions of this form. Consider the term
$\{ X_L \, y + x Y_L - X_L Y_L\}$; this means create a set by
taking all pairs $x_i \in X_L$ and $y_j \in Y_L$ and combining as
follows:  
\[ \{ X_L \, y + x Y_L - X_L Y_L\} \defequals
   \{ x_i y + x y_j - x_i y_j \mid
  \forall x_i \in X_L \mbox{ and } y_j \in Y_L \},
\]
where each of the products in the set above is another surreal
multiplication and the additions are surreal additions. 
 
% In many cases,
% terms will vanish: for instance,  product or sum with $\phi$ will
% result in $\phi$. 

% Note that it includes multiple additions and subtractions, so we need
% to use these pieces as building blocks. 

% As with addition, multiplication satisfies the formal properties of
% such with respect to the values of surreals, but associativity applies
% with respect to identity of forms. For instance, the multiplicative
% identity is $\overline{1}$, and
% \begin{eqnarray*}
%   x \times 1 & \equiv & x, \\
%   x \times \overline{1} & == & x.
% \end{eqnarray*}
% but it is not always true that $x \times 1 == x$. 

Another quick example is informative: let's work quickly through
$\overline{2} \times \overline{2}$. Once again $X_R = Y_R = \phi$, and so only one term of
the four in the multiplication is non-empty:  
\begin{eqnarray*}
  \overline{2} \times \overline{2}
     %  & == & \{ X_L y + x Y_L - X_L Y_L \mid \phi \} \\
      & == & \{ \overline{1} \times \overline{2} + \overline{2} \times \overline{1} - \overline{1} \times \overline{1} \mid \phi \} \\ 
      & == & \{ \overline{2} + \overline{2} - \overline{1} \mid \phi \} \\
      & == & \{ \overline{4} - \overline{1} \mid \phi \} \\
      & \equiv & \{ 3 \mid \phi \},
\end{eqnarray*}
where we exploit the fact that $\overline{1}$ is the multiplicative
identity for surreal forms, and $\overline{0}$ is the multiplicative
annihilator, \ie $\overline{0} x = \overline{0}$ for all $x$, a
fact seen by considering that
\[
  \overline{0} x
     == \{ \phi \, y + \overline{0} Y_L - \phi Y_L, 
           \phi \, y + \overline{0} Y_R - \phi Y_R \mid 
           \phi \, y + \overline{0} Y_L - \phi Y_L, 
           \phi \, y + \overline{0} Y_R - \phi Y_R \},
\]
and noting that operations with $\phi$ result in $\phi$. The proof of
the identity of $\overline{1}$ is very similar though it requires an
inductive step. 

We also use a minor result we have not bothered to prove that addition
of canonical forms of non-negative integers results in canonical
forms, however, subtraction does not. The result of
$\overline{2} \times \overline{2}$ looks like the canonical form for
4, but we are taking liberties by reducing `3' to its short hand. This
isn't the canonical 3, and so the shorthand is here misleading. In
fact if we write this out in full we get (as in
\cite{markcc06:_arith_surreal_number})
\begin{multline*}
  \overline{2} \times \overline{2}
  == \{   \{   \{   \{   \{   \{   \phi   \mid   \{ 
        \phi   \mid   \phi   \}   \}   \mid   \{   \{
        \phi   \mid   \phi   \}   \mid   \phi   \}   \}
        \mid   \{   \{   \{   \phi   \mid   \phi   \}
        \mid   \phi   \}   \mid   \phi   \}   \}   \mid  \\ 
    \{   \{   \{   \{   \phi   \mid   \phi   \}   \mid
       \phi   \}   \mid   \phi   \}   \mid   \phi   \}   \}
       \mid   \{   \{   \{   \{   \{   \phi   \mid   \phi
       \}   \mid   \phi   \}   \mid   \phi   \}   \mid
       \phi   \}   \mid   \phi   \}   \}   \mid   \phi   \}.
\end{multline*}
%    \begin{eqnarray*} 
%   \overline{2} \times \overline{2}
%   & = & \{   \{   \{   \{   \{   \{   \phi   \mid   \{ 
%         \phi   \mid   \phi   \}   \}   \mid   \{   \{
%         \phi   \mid   \phi   \}   \mid   \phi   \}   \}
%         \mid   \{   \{   \{   \phi   \mid   \phi   \}
%         \mid   \phi   \}   \mid   \phi   \}   \}   \mid  \\ 
%   &  & \{   \{   \{   \{   \phi   \mid   \phi   \}   \mid
%        \phi   \}   \mid   \phi   \}   \mid   \phi   \}   \}
%        \mid   \{   \{   \{   \{   \{   \phi   \mid   \phi
%        \}   \mid   \phi   \}   \mid   \phi   \}   \mid
%        \phi   \}   \mid   \phi   \}   \}   \mid   \phi   \}.
% \end{eqnarray*}

Gonshor proves a Weak Birthday Multiplication Theorem
\cite[Theorem~6.2]{gonshor_1986}, namely that
$g(x y ) \leq 3^{g(x) + g(y)}$.
% \footnote{Gonshor's result is
%   actually about the length of a sign-sequence representation of a
%   surreal which is related, but not exactly the same.}.  
Simons
\cite{simons17:_meet_the_surreal_number} conjectures a much tighter
bound that $g(x y) \leq g(x) g(y)$, and states that there are no
obvious counter-examples. It was this hypothesis that largely
motivated this investigation.
 
The problem with Simons' statement is that there are very few examples
at all. Multiplication is very complex to do in practice. As far as I
am aware only a few multiplications are explicitly tabulated!  Several
places show that simple multiplicative identities hold, but few go any
further. With the help of the {\tt SurrealNumbers} package we can
calculate other products (they quickly become too complicated to do
with pen and paper). The result of $\overline{2} \times \overline{3}$
is shown in DAG form in \autoref{fig:mult}. From this we can calculate
$g(\overline{2} \times \overline{3}) = 12$. Unfortunately this breaks
Simons' conjecture, which suggests the bound should be 6. It also
suggests that Gonshor's bound is very weak.

\autoref{tab:gen_mult} shows a table for the birthday/generation of
products. The black numbers are those that we have calculated (in
multiple ways) using the Julia package. Note that the only surreal
forms with birthdays 0 and 1 are $\overline{0}$ and $\overline{\pm 1}$
so the first two rows and columns of this table are trivially true.

Patterns appear in the results, \eg
$g(\overline{2} \times \overline{m}) = m (m+1)$, but the general
pattern is not so trivial.  For instance, the product
$g(\overline{3} \times \overline{3}) = 31$ is prime, and so the
resulting birthdays are not even the product of a function the
underlying birthdays!
   
The results below explain the pattern observed in the black values of
the table, and can be extrapolated to provide the gray values. 

\begin{figure}[!t]  
  \centering    
  \includegraphics[width=1.0\textwidth]{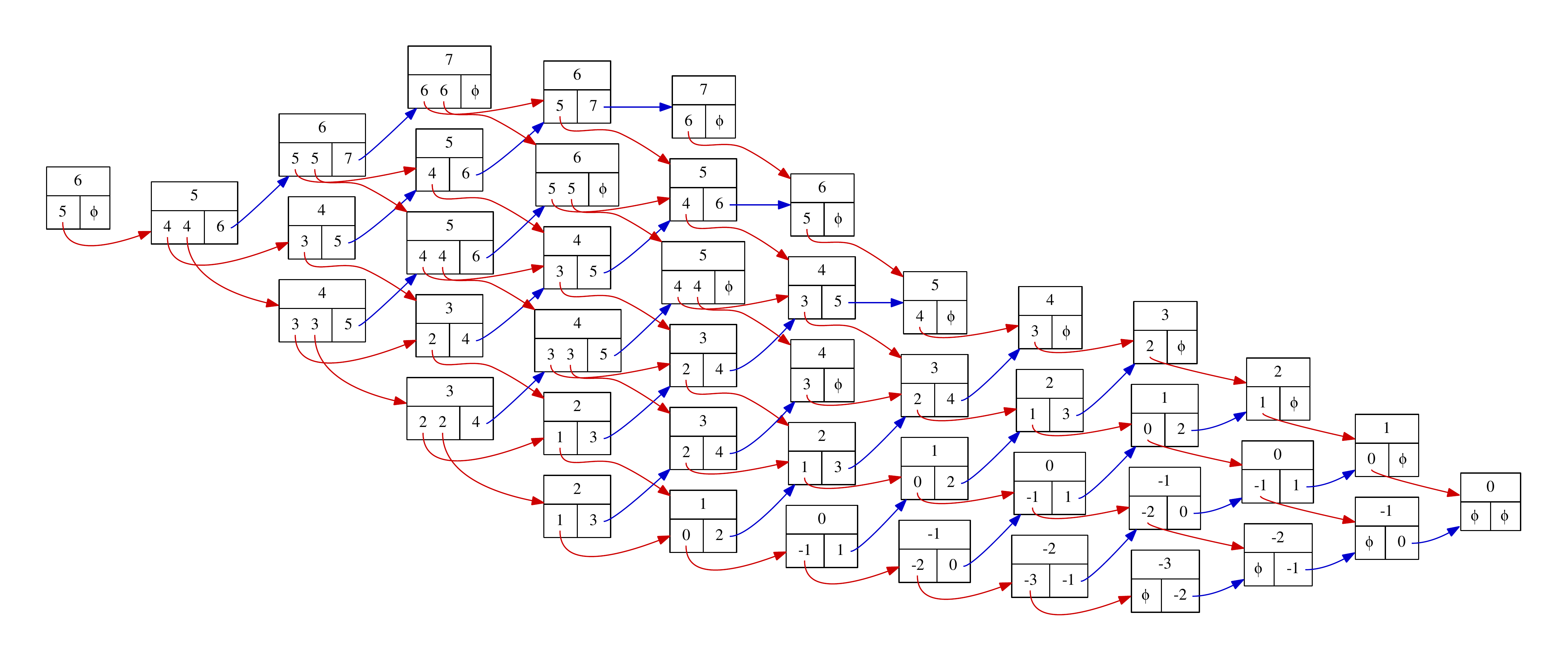} 
  \caption{Graph depicting the form of
    $\overline{2} \times \overline{3}$. Inside this we frequently see
    subforms that might superficially appear identical, \eg
    $4 = \{ 3 \mid 5\}$ appears three times, but these are not 
    identical forms, which is implicit in their dependent
    DAG. Interestingly, the DAG also includes even some negative
    values, which might be unexpected in a product of positive
    integers.}
  \label{fig:mult}
\end{figure}

\begin{lemma}
  If the generation of $x y$ is given by a function $f$
  % $f: \Z^+ \times \Z^+ \rightarrow \Z^+$ 
  of the generations of $x$ and $y$, \ie
  $g(x y) = f\big( g(x), g(y) \big)$, then the function will be
  symmetric, \ie $f(n,m) = f(m,n)$, and strictly increasing. 
\end{lemma}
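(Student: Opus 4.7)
The plan is to prove symmetry and strict monotonicity of $f$ separately, using only the definition of surreal multiplication and the birthday results for addition and subtraction already established.

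For symmetry, I would appeal to commutativity of surreal multiplication at the form level: inspection of \autoref{eqn:multiplication} shows that the simultaneous relabelling $x \leftrightarrow y$, $X_L \leftrightarrow Y_L$, $X_R \leftrightarrow Y_R$ maps each of the four quadrant expressions to itself, modulo commutativity of surreal addition (which itself holds at the form level because the left and right sets of a sum are built by set unions, and inductively because the component products obey commutativity at lower generations). Hence $xy$ and $yx$ are identical as forms, so $g(xy) = g(yx)$, and the hypothesis $g(xy) = f(g(x),g(y))$ gives $f(n,m) = f(m,n)$.

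For strict monotonicity I would use a parent-extraction argument. Given any $y$ with $g(y) = m$, construct a companion form $y' \defequals \{ y \mid \phi \}$ whose sole parent is $y$, so $g(y') = m+1$. Pick any $x$ with $g(x) = n \geq 1$; then $x$ is not the canonical $\overline{0}$, so at least one of $X_L, X_R$ is non-empty and we can choose a parent $x_P$. Expanding $xy'$ via \autoref{eqn:multiplication}, only the two quadrants involving $Y_L' = \{y\}$ contribute (since $Y_R' = \phi$), but the element $x_P y' + xy - x_P y$ appears as a parent of $xy'$ regardless of which side $x_P$ came from. By \autoref{thm:birth_add} and \autoref{thm:birth_sub} its generation equals $g(x_P y') + g(xy) + g(x_P y)$, whence
\[
  g(xy') \;\geq\; g(x_P y') + g(xy) + g(x_P y) + 1 \;\geq\; g(xy) + 1.
\]
Feeding this into the hypothesis yields $f(n, m+1) > f(n, m)$, and by the symmetry just established $f(n+1, m) > f(n, m)$ as well.

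The main obstacle is not a deep idea but careful bookkeeping: verifying that $y' = \{ y \mid \phi \}$ is a legitimate surreal form (the non-overlap condition is vacuous once $Y_R'$ is empty); checking that the nominated parent of $xy'$ really does appear in the union over quadrants whether $x_P$ lies in $X_L$ or in $X_R$ (both cases produce an expression of the same additive signature, so the inequality is identical); and flagging the degenerate axes $n=0$ and $m=0$, where $\overline{0}$ annihilates multiplicatively and pins $f(0,m) = f(n,0) = 0$. Strict monotonicity is therefore to be understood as strict increase in each argument whenever the other is at least one, with the two coordinate axes pinned at zero.
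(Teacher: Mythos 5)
Your proposal is correct, and while the symmetry half is essentially the paper's argument (the paper simply invokes ``commutativity of multiplication''), your monotonicity half runs in the opposite direction from the paper's. The paper descends: it takes the youngest (maximal-generation) parent $x_p^{(max)}$ of $x$, notes that the parents of $xy$ have the shape $x_p y + x y_p - x_p y_p$ so that $g(xy) > g(x_p y)$, and reads off $f\big(g(x_p^{(max)})+1, g(y)\big) > f\big(g(x_p^{(max)}), g(y)\big)$, i.e.\ $f(n,m) > f(n-1,m)$. You ascend: you manufacture a child $y' == \{\, y \mid \phi \,\}$ with $g(y') = m+1$, exhibit $x_P y' + xy - x_P y$ as a parent of $xy'$, and use Theorem~\ref{thm:birth_add} and Corollary~\ref{thm:birth_sub} to get $g(xy') \geq g(xy)+1$, i.e.\ $f(n,m+1) > f(n,m)$. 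The two are logically equivalent uses of the universal hypothesis on $f$, but your version buys two genuine improvements in rigor: first, you justify symmetry at the \emph{form} level (identical left/right sets under the swap), which is what is actually needed, since commutativity of surreal multiplication in the literature is a statement about values, and equivalent forms can have different birthdays; second, you explicitly flag the degenerate axes $n=0$, $m=0$, where $\overline{0}$ annihilates and $f$ is pinned at $0$, so ``strictly increasing'' must be read as strict increase in each argument when the other is at least $1$ --- a caveat the paper's proof (which tacitly assumes both factors have parents) glosses over, and which is exactly the monotonicity needed later in Theorem~\ref{thm:birthday_mult}.
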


\begin{proof}
  The function must be symmetric by the commutativity of
  multiplication. Now find the ``maximal'' parent of $x$, \ie that is
  the parent $x_p^{(max)}$ that has the maximum generation so that
  $g(x) = g(x^{(max)}_p) + 1$. If the generation of $x y$ is given by
  a function $ f\big( g(x), g(y) \big)$ then 
  \[ g(x y) = f\big(g(x^{(max)}_p) + 1, g(y)\big).
           % \geq f\big(g(x_p) + 1, g(y)\big).
  \]
  Also $x y$ contains parents containing products of all pairs of
  parents of $x$ and $y$, and so $ g(x y) > g(x_p y)$ for all
  $x_p \in X_P$. Choosing the maximal parent we get
  \[  f\big(g(x^{(max)}_p) + 1, g(y)\big) >  f\big(g(x^{(max)}_p), g(y)\big),
  \]
  and as we can find examples $x$ for any $n=g(x)$, this must be true
  for all values of $n$, and likewise $m$, and hence the function is
  increasing. 
\end{proof}

\begin{table}[t]
  \centering
  \footnotesize
  \setlength{\tabcolsep}{2.5mm}
  \renewcommand{\arraystretch}{1.2}
  \begin{tabular}{rr|rrrrrrr|l}
    \hline
           & & \multicolumn{7}{|c|}{$m = g(y)$} & \\
           & & 0 & 1 & 2 & 3 & 4 & 5 & 6 &  \\
    \hline
     \multirow{7}{*}{$n = g(x)$}
    &  0  & 0 & 0 & 0  &   0 &  0 &   0 &  0 \\  
    &  1  & 0 & 1 & 2  &   3 &  4 &   5 &  6 & $ = m$ \\
    &  2  & 0 & 2 & 6  &  12 & 20 &  30 & 42 & $ = m(m+1)$ \\
    &  3  & 0 & 3 & 12 &  31 & 64 & 115 & {\color{gray} \it 188} & \\ %$\simeq n(n+1)(n+2)/2$ \\
    &  4  & 0 & 4 & 20 &  64 & {\color{gray} \it 160}  & {\color{gray} \it 340} & {\color{gray} \it 644} & \\ % $\simeq n(n+1)(n+2)(n+3)/6$    \\
    &  5  & 0 & 5 & 30 & 115 & {\color{gray} \it 340} & {\color{gray} \it 841} & {\color{gray} \it 1826} &     \\
    &  6  & 0 & 6 & 42 & {\color{gray} \it 188} & {\color{gray} \it 644} & {\color{gray} \it 1826} & {\color{gray} \it 4494} &   $f(n,m)$    \\
    \hline
       \abrule
  \end{tabular}
  \caption{The values of $g(x y)$ in relation to $n=g(x)$ and
    $m=g(y)$. Roman values have been verified through multiple
    instances of the products of (not just canonical) forms from the same
    generation. 
    % Note that each term is the
    % one plus the sum of the adjacent terms to the left and above it
    % (including the diagonal). 
    Italic are extrapolated using \autoref{thm:birthday_mult}. The
    right-hand column expresses the pattern, where known. }
  \label{tab:gen_mult} 
\end{table}

\begin{theorem}[Birthday multiplication theorem]
  \label{thm:birthday_mult}
  The generation of $x y$ is given by a function
  $f(\cdot,\cdot)$ of the generations of $x$ and $y$, \ie
  $g(x y) = f\big( g(x), g(y) \big)$, where the function
  $f(\cdot,\cdot)$ satisfies the recurrence relation
  \[
     f(n,m) = \left\{ \begin{array}{ll}
                        0, & \mbox{ if } m=0, \\
                        0, & \mbox{ if } n=0, \\
                        f(n,m-1) + f(n-1,m) + f(n-1,m-1) +1, & \mbox{ otherwise}, \\
                      \end{array}
               \right.      
  \]                   
  for $n, m \in \Z^+$.
\end{theorem}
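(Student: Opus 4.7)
The plan is to proceed by strong induction on $g(x)+g(y)$, reading the generation of $xy$ directly off the multiplication definition \autoref{eqn:multiplication} and using \autoref{thm:birth_add} together with \autoref{thm:birth_sub} to reduce the generation of each parent of $xy$ to an expression in the generations of strictly smaller products, for which the inductive hypothesis applies. To set this up, first define $f$ independently by the recurrence in the theorem; a short inductive check on $n+m$ then shows $f$ is symmetric and strictly increasing in each coordinate (this monotonicity is the substantive content one needs from the preceding lemma).

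For the base case, $g(x)=0$ forces $x == \overline{0}$, whence the annihilator identity $\overline{0}\, x == \overline{0}$ noted earlier gives $xy \equiv \overline{0}$ and $g(xy)=0=f(0,m)$; the case $g(y)=0$ is symmetric. For the inductive step with $n=g(x) \geq 1$ and $m=g(y) \geq 1$, observe that every parent of $xy$ has the form $x_p y + x y_p - x_p y_p$ for some $(x_p, y_p) \in X_P \times Y_P$, since the four families of parents in the multiplication definition together range over all such pairs. By \autoref{thm:birth_add} and \autoref{thm:birth_sub}, each such parent has generation
\[
  g(x_p y) + g(x y_p) + g(x_p y_p),
\]
and each of the three products on the right involves a pair of generations whose sum is strictly below $n+m$. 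Hence the inductive hypothesis rewrites this quantity as $f(g(x_p), m) + f(n, g(y_p)) + f(g(x_p), g(y_p))$.

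Since $g(xy)$ is $1$ plus the supremum of this expression over $(x_p, y_p) \in X_P \times Y_P$, and $f$ is strictly increasing in each coordinate, the supremum is attained by choosing parents of maximal generation $n-1$ and $m-1$ respectively (which exist because $n, m \geq 1$ force $X_P, Y_P$ to be non-empty). The resulting value is precisely $f(n-1,m) + f(n,m-1) + f(n-1,m-1) + 1 = f(n,m)$, which simultaneously establishes that $g(xy)$ depends only on $g(x)$ and $g(y)$ and verifies the stated recurrence.

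The main obstacle is the bookkeeping around the supremum: one must check carefully that the four parent families of \autoref{eqn:multiplication} do cover every pair in $X_P \times Y_P$, so that a maximal pair $(x_p, y_p)$ is always realisable even in degenerate cases where one of $X_L, X_R, Y_L, Y_R$ is empty, and that the monotonicity of $f$ is only invoked on arguments strictly below $(n,m)$, where the inductive hypothesis already guarantees agreement between $f$ and the generations of the corresponding products.
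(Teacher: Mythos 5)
Your proposal is correct and follows essentially the same route as the paper: the same base case via the annihilator $\overline{0}$, the same identification of the parents of $xy$ as $x_p y + x y_p - x_p y_p$, reduction via the birthday addition theorem and subtraction corollary, and selection of maximal-generation parents to evaluate the supremum. The only (minor, and arguably tidier) difference is that you define $f$ by the recurrence and prove its monotonicity directly, rather than invoking the paper's preceding lemma, which establishes symmetry and monotonicity conditionally on the existence of such an $f$.
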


\begin{proof}
  The only number $x$ with $g(x) = 0$ is $\overline{0}$ which is the
  multiplicative annihilator, \ie
  $\overline{0} \times x = \overline{0}$ for all $x$, and hence the
  bounding case $n=0$ and by symmetry $m=0$.

  For $n,m > 0$ we start from the definition of multiplication
  \autoref{eqn:multiplication}, which contains terms
  $x_p y + x y_p - x_p y_p$ for all pairs of parents $(x_p, y_p)$,
  and hence
  \begin{eqnarray*}
    g(x y) 
    & = & \sup_{(x_p, y_p)}   g(x_p y + x y_p - x_p y_p) + 1 \\
    & = & \sup_{(x_p, y_p)}\left[  g(x_p y) + g(x y_p) + g(x_p y_p) \right] + 1,
  \end{eqnarray*}
  by the Birthday Addition Theorem, and its Subtraction corollary. 

  Assume (for inductive purposes) that
  $g(x_p y) = f\big(g(x_p), g(y)\big)$, and similarly for the other
  such terms, and so
  \begin{eqnarray*}
    f\big( g(x), g(y) \big)
    & = &  \sup_{(x_p, y_p)}\left[  f\big(g(x_p), g(y)\big) + f\big(g(x), g(y_p)\big) + f\big(g(x_p), g(y_p)\big) \right] + 1,
  \end{eqnarray*}
  We can choose the respective parents $x_p$ and $y_p$
  independently. By the previous theorem the function $f(n,m)$ must be
  increasing. Hence the above sum will be maximised when we choose
  $(x_p, y_p)$ such that $g(x_p)$ and $g(y_p)$ are both individually
  maximised.  In this case note the definition of $g(x)$ in
  \autoref{eq:rho_def},
  % \[ g(x) = \sup_{x_p} g(x_p) + 1, \] 
  and hence 
  \begin{eqnarray*}
    \lefteqn{f\big( g(x), g(y) \big)} \\
    & = &  \sup_{x_p}  f\big(g(x_p), g(y)\big) +  
          \sup_{y_p} f\big(g(x),g(y_p)\big) +     \sup_{(x_p, y_p)} f\big(g(x_p), g(y_p)\big) +
          1,\\ 
    & = &  f\big(g(x)-1, g(y)\big) +  f\big(g(x), g(y)-1\big) + f\big(g(x)-1, g(y)-1\big) + 1,
  \end{eqnarray*}
  where existence of the suprema is required by the construction of
  the surreals. 
\end{proof}

Functions defined by the recurrence relationship of
\autoref{thm:birthday_mult} have been studied by
Fredman~\cite{fredman82}, and are summarised in
\cite{oeis_a047662}. \autoref{tab:gen_mult} shows values that have
been derived empirically via multiplication of surreal forms, and (in
grey) values that have been derived from the
recurrence. Unfortunately, as the depth of recursion is given by the
birthday, and multiplication is built from recursive application of
multiple recursive operations, combined across the two surreal forms,
we have not been able to pursue complicate surreal multiplications
with resulting generations beyond around 100, though it is noteworthy
that many of the empirical values in the table were first extrapolated
using the recursion, before being verified computationally.

\autoref{thm:birthday_mult} leads to a number of immediate corollaries
regarding asymptotic growth of surreals birthdays in particular
cases. 

\begin{corollary}
  The birthday/generation of $\overline{n}^2$ takes values
  $0,1,6,31,160,841,4494,\ldots $, which grow as  
  \vspace{-2mm}
  \[ g\left(\overline{n}^2\right) \sim a \lambda^n / \sqrt{n}, \]
  where $\lambda = 3 + 2 \sqrt{2} \approx 5.83$ and $a =
   2^{-9/4} \sqrt{\lambda/\pi} \approx 0.29$. 
\end{corollary}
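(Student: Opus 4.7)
The plan is to reduce $g(\overline{n}^2) = f(n,n)$, where $f$ is the function defined by the recurrence in \autoref{thm:birthday_mult}, to central Delannoy numbers, whose asymptotics are a classical application of singularity analysis. First I would derive the bivariate generating function. Multiplying the recurrence by $x^n y^m$, summing over $n, m \geq 1$, and using the boundary conditions $f(n,0) = f(0,m) = 0$ yields
\[ F(x,y) \defequals \sum_{n,m \geq 0} f(n,m)\, x^n y^m = \frac{xy}{(1-x)(1-y)(1-x-y-xy)}. \]
Since $\frac{1}{1-x-y-xy}$ is the classical bivariate generating function for the Delannoy numbers $D(a,b)$, and $\frac{xy}{(1-x)(1-y)} = \sum_{a,b \geq 1} x^a y^b$, coefficient extraction gives the clean identity
\[ f(n,n) = \sum_{a=0}^{n-1} \sum_{b=0}^{n-1} D(a,b), \]
which one can verify reproduces $0, 1, 6, 31, 160, 841, 4494, \ldots$ term by term.

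Next I would extract a univariate asymptotic. The central Delannoy numbers have generating function $\sum_n D(n,n)\, t^n = (1 - 6t + t^2)^{-1/2}$, with dominant singularity at $t = 3 - 2\sqrt{2} = 1/\lambda$. Standard singularity analysis — factoring $1 - 6t + t^2 = (1 - \lambda t)(1 - t/\lambda)$ and extracting the local behaviour of $(1-\lambda t)^{-1/2}$ — gives
\[ D(n,n) \sim \frac{\lambda^{n+1/2}}{2^{5/4}\sqrt{\pi n}}. \]
For the double partial sum $f(n,n)$ the dominant behaviour is controlled by the same singularity, modulated by the factor $\frac{xy}{(1-x)(1-y)}$ evaluated at the critical point $x = y = \sqrt{2}-1$ on the diagonal direction of the singular variety $1-x-y-xy=0$. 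At this point $(1-x)(1-y) = (2-\sqrt{2})^2 = 2(\sqrt{2}-1)^2 = 2xy$, so the factor contributes exactly $1/2$ and $f(n,n) \sim \tfrac{1}{2} D(n,n)$. (Equivalently, one can apply smooth-point diagonal asymptotics to $F(x,y)$ directly in the Flajolet--Sedgewick framework.) Assembling the pieces,
\[ f(n,n) \sim \frac{\lambda^{n+1/2}}{2^{9/4}\sqrt{\pi n}} = 2^{-9/4}\sqrt{\lambda/\pi}\;\frac{\lambda^n}{\sqrt{n}}, \]
which is exactly the claimed asymptotic with $a = 2^{-9/4}\sqrt{\lambda/\pi}$.

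The main obstacle is bookkeeping the multiplicative constant. The exponential rate $\lambda = 3+2\sqrt{2}$ falls out of the dominant singularity essentially for free, but the factor $2^{-9/4}$ only appears after combining the $2^{-5/4}$ from the Delannoy singularity expansion with the $1/2$ from the critical-point evaluation of $xy/((1-x)(1-y))$. It is easy to slip a spurious factor of $\sqrt{2}$ because $\lambda$, $x_0 y_0$, $1-x_0$, and the half-integer powers appearing in the local expansion are all expressible in terms of $\sqrt{2}\pm 1$ and must be cancelled consistently. Everything else — the generating-function manipulation, the identification with Delannoy numbers, and the univariate singularity analysis — is mechanical.
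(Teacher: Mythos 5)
Your proposal is correct, but it proves the asymptotic rather than citing it: the paper's own ``proof'' is a one-line appeal to Fredman and to OEIS entry A047665, which record exactly the facts you re-derive, namely that the diagonal $f(n,n)$ of the recurrence in \autoref{thm:birthday_mult} is tied to the central Delannoy numbers $D(n,n)$ (indeed $f(n,n) = \bigl(D(n,n)-1\bigr)/2$ exactly), whose growth rate $\lambda = 3+2\sqrt{2}$ and constant are classical. Your route is genuinely different in character: the bivariate generating function $F(x,y) = xy\big/\bigl((1-x)(1-y)(1-x-y-xy)\bigr)$ is correct, the identity $f(n,n)=\sum_{a,b<n} D(a,b)$ checks out against $0,1,6,31,160,841,4494$, and the univariate singularity analysis of $(1-6t+t^2)^{-1/2}$ gives the right $D(n,n)\sim \lambda^{n+1/2}/(2^{5/4}\sqrt{\pi n})$, so the final constant $a = 2^{-9/4}\sqrt{\lambda/\pi}$ is assembled correctly. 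What your approach buys is a self-contained explanation of where $\lambda$ and $a$ come from, plus the structural bonus that the birthday diagonal is (up to an affine shift) a Delannoy diagonal; what the paper's citation buys is brevity. One caution: the step $f(n,n)\sim\tfrac12 D(n,n)$, justified by evaluating $xy/\bigl((1-x)(1-y)\bigr)$ at the critical point $x=y=\sqrt2-1$, is a smooth-point diagonal argument in the Pemantle--Wilson (ACSV) framework rather than Flajolet--Sedgewick, and to make it rigorous you must check minimality and that the poles at $x=1$ and $y=1$ do not contribute to the exponential rate; alternatively you can sidestep all of this by proving the exact identity $f(n,n)=\bigl(D(n,n)-1\bigr)/2$ (a short induction from the two recurrences, or a comparison of generating functions), after which the asymptotic follows from the central Delannoy asymptotics alone.
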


\begin{proof}
  The values $g\left(\overline{n}^2\right)$ are the main diagonal of
  $f(n,m)$, which are given in \cite{oeis_a047665,fredman82}.
\end{proof}

\begin{corollary}
  The birthday/generation of powers of $\overline{2}$ takes values
  $2,6,42, 18006, \ldots$, which grow as
  $ \lfloor c^{(2^n)} \rfloor$ for
  $c = 1.597910218031873178338070118157... $.
\end{corollary}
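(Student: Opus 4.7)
The plan is to reduce the corollary to a classical Sylvester-type integer recurrence, then invoke a standard estimate to promote the asymptotic into a floor identity. Write $a_n \defequals g(\overline{2}^n)$; because \autoref{thm:birthday_mult} guarantees that $g(xy)$ depends only on $g(x)$ and $g(y)$, the birthday of any bracketing of the $n$-fold product of $\overline{2}$ with itself is the same, so $a_n$ is well defined. The first step is to extract the closed form $f(n,2) = n(n+1)$ directly from the recurrence of \autoref{thm:birthday_mult}: using $f(n,1)=n$ (itself an easy induction from $f(n,0)=0$ together with the recurrence), setting $m=2$ collapses the recurrence to $f(n,2) = f(n-1,2) + 2n$, whence $f(0,2)=0$ forces $f(n,2)=n(n+1)$. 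Combined with $g(\overline{2})=2$ and $a_{n+1} = f(a_n,2)$, this yields
\[
  a_{n+1} \;=\; a_n(a_n+1), \qquad a_1 = 2,
\]
producing the initial values $2,6,42,1806,\ldots$

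Next I would analyse this recurrence asymptotically by setting $c_n \defequals a_n^{1/2^n}$. Since $a_{n+1} = a_n^2(1+1/a_n)$, we have $c_{n+1} = c_n\,(1+1/a_n)^{1/2^{n+1}} > c_n$, so $c_n$ is strictly increasing and the identity $\log c_n = \log c_1 + \sum_{k=1}^{n-1} 2^{-(k+1)} \log(1+1/a_k)$ shows the limit exists, since $a_k$ grows doubly-exponentially and the series converges extremely rapidly. Let $c \defequals \lim_n c_n$. The lower bound $a_n < c^{2^n}$ is then immediate from $c_n < c$.

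The main obstacle is the matching upper bound $c^{2^n} < a_n + 1$, which together with the previous step yields $a_n = \lfloor c^{2^n}\rfloor$. From the telescoping product $c = c_n \prod_{k \geq n}(1+1/a_k)^{1/2^{k+1}}$ we obtain
\[
  c^{2^n} \;=\; a_n \prod_{j \geq 0} \bigl(1 + 1/a_{n+j}\bigr)^{1/2^{j+1}},
\]
so the claim reduces after taking logs to $\sum_{j \geq 0} 2^{-(j+1)} \log(1+1/a_{n+j}) < \log(1+1/a_n)$. Using $\log(1+x) < x$ and the estimate $a_{n+j} \geq a_n^{2^j}$ (which follows by iterating $a_{k+1} > a_k^2$), the $j \geq 1$ tail is bounded by a geometric series of order $O(a_n^{-2})$, while the $j=0$ term contributes only $\tfrac{1}{2}\log(1+1/a_n)$, leaving ample slack for every $a_n \geq 2$. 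Finally, the explicit numerical value $c = 1.597910218\ldots$ is simply the value of the rapidly convergent product above; I would identify it with the growth constant of the iteration $a_{n+1} = a_n(a_n+1)$ as catalogued in OEIS and by Fredman~\cite{fredman82}, in exact analogy with the preceding corollary.
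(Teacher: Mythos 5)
Your proposal is correct, and it reaches the result by a more self-contained route than the paper. The reduction is the same in both cases: use the Birthday Multiplication Theorem to get $g(\overline{2}^{\,n}) = f\big(2, g(\overline{2}^{\,n-1})\big)$ and the closed form $f(2,m)=m(m+1)$, hence the Sylvester-type recurrence $a_{n+1}=a_n(a_n+1)$ with $a_1=2$. The paper simply asserts $f(2,m)=m(m+1)$ as a consequence of the theorem and then cites the OEIS entry for this recurrence to obtain the floor formula $a_n=\lfloor c^{(2^n)}\rfloor$; you instead (i) derive $f(n,1)=n$ and $f(n,2)=n(n+1)$ explicitly from the recurrence of \autoref{thm:birthday_mult}, and (ii) prove the floor identity from scratch via the standard Aho--Sloane device $c_n=a_n^{1/2^n}$, the telescoping product for $c$, and the tail estimate $\sum_{j\geq 1}2^{-(j+1)}\log(1+1/a_{n+j})<\tfrac12\log(1+1/a_n)$, which your bounds ($\log(1+x)<x$ and $a_{n+j}\geq a_n^{2^j}$) do indeed support for $a_n\geq 2$. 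What your approach buys is a proof that does not lean on an external reference for the key floor identity, at the cost of length; what the paper's citation buys is brevity, since the result for $a_{n+1}=a_n(a_n+1)$ is classical. One incidental point in your favour: your computed value $a_4 = 42\cdot 43 = 1806$ is correct, and the value $18006$ appearing in the corollary statement is a typographical error.
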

 
\begin{proof}
  The generation of $\overline{2}^n$ is
  \[
    g\left( \overline{2} \times \overline{2}^{n-1} \right)
    = f\left( 2, g\big( \overline{2}^{n-1} \big) \right) , 
  \]
  where $f(2,n) = n(n+1)$ from \autoref{thm:birthday_mult}, and hence
  \[
    g\big( \overline{2}^{n} \big)
    = g\left( \overline{2}^{n-1} \right) \left( g\big( \overline{2}^{n-1} \big) +1 \right).
  \]
  Now the sequence $g_n = g_{n-1} (g_{n-1} + 1)$ is known~\cite{oeis_a007018},
  and follows  $g_n = \lfloor c^{(2^n)}   \rfloor .$
\end{proof}

The outstanding feature of both of these corollaries is the very high
rate of growth. The size and complexity of calculations involving
these forms grows even faster than the generation of the output due to
the complicated set of recursive operations built on top of each
other, and hence the complexity of larger computations.

\section{Conclusion}
  
This paper derived rules for birthday arithmetic (in particular,
addition, subtraction and multiplication) for surreal number forms.

The notable absentee from this list is division. Naively, division is
the reciprocal of multiplication, and so should be no harder, \ie
$x/y = x \times (1/y)$.  But division is in fact quite different. Only
dyadic surreals have finite representations. Thus we can represent
1/2, 15/16 and so on exactly with a finite form. However, non-dyadic
real and even simple rationals are not finite. Thus numbers even as
simple as 1/3 do not have a finite representation. So to apply the
multiplication theorem to division, we need a formula to calculate the
birthday of a reciprocal. This remains to be found.

\section*{Acknowledgements}

I would like to thank David Roberts for helpful comments on this
manuscript.

% Can use something like this to put references on a page
% by themselves when using endfloat and the captionsoff option.
% \ifCLASSOPTIONcaptionsoff
%   \newpage 
% \fi 
    
% \clearpage
{
\bibliographystyle{IEEEtran}
% \bibliography{statistics,security,surreals,computation,graph,general} 
\bibliography{paper} 
}\par\leavevmode 

\end{document}